\documentclass[11pt, a4paper]{article}
\usepackage[utf8]{inputenc}
\usepackage[T1]{fontenc}
\usepackage{amsmath, amssymb, amsthm}
\usepackage{mathrsfs}
\usepackage{geometry} \usepackage{tikz}
\usepackage{tikz-cd} \usetikzlibrary{arrows.meta, positioning, shapes.geometric, patterns, calc, intersections }
\usepackage{graphicx}
\usepackage{caption}
\usepackage{subcaption}
\newtheorem{example}{Example}
\newtheorem{theorem}{Theorem}
\newtheorem{lemma}{Lemma}
\newtheorem{definition}{Definition}
\newtheorem{proposition}{Proposition}
\newtheorem{remark}{Remark}
\newtheorem{conjecture}{Conjecture}

\newcommand{\R}{\mathbb{R}}
\newcommand{\Sph}{\mathbb{S}}

\newcommand{\Symp}{\mathrm{Symp}}
\newcommand{\Cosymp}{\mathrm{Cosymp}}

\newcommand{\Ham}{\mathrm{Ham}}
\newtheorem{corollary}{Corollary}

\DeclareMathOperator{\IsomKah}{Isom_{\mathrm{Kah}}}

\newcommand{\Z}{\mathbb{Z}}
\newcommand{\Sone}{S^1}

\title{\textbf{Isomorphism of cosymplectomorphism groups implies diffeomorphism of manifolds}}

\author{\scshape
E. Djoukeng\thanks{djoukeng.etienne@ubuea.cm, Department of Mathematics, The University of Buea, South West Region, Cameroon}
	and S. Tchuiaga\thanks{tchuiaga.kameni@ubuea.cm, Department of Mathematics, The University of Buea, South West Region, Cameroon}
	}

\begin{document}
	
	\maketitle
	
	\begin{abstract}
		We prove that if two closed, connected, regular cosymplectic manifolds have isomorphic groups of cosymplectomorphisms (as topological groups), then the underlying manifolds are diffeomorphic. The proof proceeds by characterizing the Reeb flow as the center of the group and descending the isomorphism to the symplectic base manifolds. We show that the isomorphism preserves the conjugacy class of the monodromy of the mapping torus, which ensures that the bundle structures, and thus the total spaces are equivalent.
	\end{abstract}
	
	{\bf Keywords:} Cosymplectic geometry, Cosymplectic maps, Lie group, Mapping torus.\\
	\textbf{2000 Mathematics Subject Classification:} 53D10, 20E32, 37J35, 20F05.
	
	\section{Introduction}
	
	The study of transformation groups has long provided a profound lens through which the geometry and topology of manifolds can be understood. Some well-known results, such as Filipkiewicz's theorem on diffeomorphism groups \cite{Filip82}, reveal that the algebraic properties of diffeomorphism groups encode deep information about the underlying manifold. In particular, analyzing the manifold's symmetry groups often allows us to rebuild its geometry using only dynamical or algebraic information.
	
	Within this framework, cosymplectic manifolds occupy a central role as the odd-dimensional counterparts of symplectic manifolds, naturally arising in contact and Hamiltonian dynamics, as well as in geometric models of time-dependent mechanics \cite{alb89, man91}. Their cosymplectomorphism groups intertwine the Reeb dynamics with the transverse symplectic geometry, thereby capturing both the foliation structure and the symplectic base in a single algebraic object. In the compact regular case, the Reeb flow is periodic, we have a mapping torus description where the cosymplectomorphism group decomposes as a product of the circle action generated by the Reeb flow and the centralizer of the monodromy in the symplectomorphism group of the fibre. This explicit structure highlights how algebraic decomposition reflects both dynamics and geometry.
	
	Transformation group rigidity has been established in several contexts. Filipkiewicz showed that isomorphisms between diffeomorphism groups imply diffeomorphism of the underlying manifolds \cite{Filip82}. Banyaga proved that the symplectomorphism group uniquely characterizes the symplectic manifold up to symplectomorphism \cite{ban78}. More recently, Bikorimana and Tchuiaga proved that the group of weakly Hamiltonian diffeomorphisms on closed cosymplectic manifolds is simple, with the Reeb flow always periodic \cite{BT26}.
	
	The present work establishes a new rigidity phenomenon in this direction: if two closed, connected, regular cosymplectic manifolds have isomorphic cosymplectomorphism groups (as topological groups), then the manifolds themselves are diffeomorphic. A crucial subtlety in this problem is that regular cosymplectic manifolds are circle bundles over a symplectic base. An isomorphism of the groups implies the bases are diffeomorphic, but this alone does not imply the total spaces are diffeomorphic (as different bundles can share the same base). We overcome this by using the mapping torus structure to show that the group isomorphism identifies the monodromy maps of the bundles up to conjugation. This fixes the topological "twist" of the bundle, ensuring the total spaces are diffeomorphic.
	
	\section{Cosymplectic settings}\label{Cosymp:setting}
	A cosymplectic manifold \((M, \eta, \omega)\) of dimension \(2n+1\) is defined by a pair of closed forms \((\eta, \omega)\) where \(\eta\) is a 1-form and \(\omega\) is a 2-form such that \(\eta \wedge \omega^n\) is a volume form. This structure induces a canonical splitting of the tangent bundle \(TM = \langle \xi \rangle \oplus \ker(\eta)\), where the Reeb vector field \(\xi\) is defined by \(\eta(\xi) = 1\) and \(\iota_\xi \omega = 0\). The distribution \(\ker(\eta)\) is symplectic with respect to the restricted form \(\omega|_{\ker(\eta)}\). For a more in-depth treatment, we refer the reader to \cite{Liber91}, \cite{BT26}.
	
	\begin{example}[The Standard Product Manifold]
		The simplest and most fundamental example is the product manifold \(M = S^1 \times P\), where \((P, \sigma)\) is any \(2n\)-dimensional symplectic manifold with symplectic form $\sigma$.
	\end{example}
	
	\begin{example}[Symplectic Mapping Torus]
		A more sophisticated class of examples is given by symplectic mapping tori. Let \((P, \sigma)\) be a symplectic manifold and let \(\phi: P \to P\) be a symplectomorphism (\(\phi^* \sigma = \sigma\)). The mapping torus is the manifold
		$M_\phi = (P \times [0,1]) / \sim, \quad \text{where} \quad (p, 1) \sim (\phi(p), 0).$
		\(M_\phi\) is a fiber bundle over \(S^1\) with fiber \(P\). Let \(t\) be the coordinate on the \(S^1\) base. We can define a global 1-form \(\eta = dt\) and a 2-form \(\omega\) that restricts to \(\sigma\) on each fiber. This pair \((\eta, \omega)\) defines a cosymplectic structure on \(M_\phi\).
		
	\end{example}
	
	\begin{definition}\cite{BT26}
		A diffeomorphism \(\phi:M\to M\) is a cosymplectomorphism if
		\[
		\phi^*\eta = \eta,\qquad \phi^*\omega = \omega.
		\]
		The set of all such maps forms the cosymplectomorphism group \(\Cosymp(M,\eta,\omega)\), denoted \(\Cosymp(M)\).
	\end{definition}
	
	Assumption (Regularity): We assume the Reeb foliation is regular. Consequently, the space of orbits \(B = M / \sim\) (where \(x\sim y\) if they lie on the same integral curve of \(\xi\)) is a smooth manifold, and the projection \(\pi:M\to B\) is a principal bundle. Since \(M\) is compact and regular, the fibers are circles \(\Sph^1\) and the flow is periodic. The form \(\omega\) descends to a symplectic form \(\bar{\omega}\) on \(B\). Since \(M\) has no boundary, \(B\) is a compact symplectic manifold without boundary.
	
	\subsection{Statement of the main theorem}
	
	\begin{theorem}
		Let \((M_1,\eta_1,\omega_1)\) and \((M_2,\eta_2,\omega_2)\) be two compact, connected, regular cosymplectic manifolds without boundary. Suppose there exists an isomorphism of topological groups:
		\[
		\Phi: \Cosymp(M_1) \xrightarrow{\cong} \Cosymp(M_2),
		\]
		where the groups are equipped with the \(C^\infty\) compact-open topology. Then the circle bundle classes of $M_1$ and $M_2$ in $H^2(B,\Z)$ coincide, where $B$ denotes the common symplectic base. Moreover, $M_1$ and $M_2$ are diffeomorphic as cosymplectic manifolds.
	\end{theorem}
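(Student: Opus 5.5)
The plan is to follow the three-step strategy announced in the abstract: recover the Reeb circle algebraically, descend $\Phi$ to the symplectic bases, and then recover the bundle twist from the group extension.

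\emph{Step 1: the Reeb circle is the identity component of the centre.} Every $\phi\in\Cosymp(M)$ satisfies $\phi^*\eta=\eta$ and $\phi^*\omega=\omega$. Hence $\phi_*\xi=\xi$, so the Reeb flow $\{R_t\}$ commutes with $\Cosymp(M)$. By regularity and compactness this flow is periodic and gives a closed subgroup $S_M\cong S^1$ of the centre $Z(\Cosymp(M))$.

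For the converse, let $\psi$ be central. Then $\psi$ commutes with lifts of Hamiltonian flows of $(B,\bar\omega)$. Concretely, for $H\in C^\infty(B)$ the function $H\circ\pi$ generates a cosymplectic vector field on $M$ that projects to $X_H$. Since $\mathrm{Ham}(B)$ acts on $B$ with trivial centralizer, the map $\bar\psi$ induced by $\psi$ on $B$ must be the identity. So $\psi$ preserves every Reeb circle and, preserving $\eta$, acts on each circle by a rotation $R_{f(b)}$. Preserving $\omega$ and $\eta$ then forces $df=0$, so $f$ is constant and $\psi\in S_M$.

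It follows that $Z(\Cosymp(M))_0=S_M$. This is a purely topological-group invariant, so $\Phi(S_{M_1})=S_{M_2}$.

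\emph{Step 2: descending to the bases.} $\Phi$ induces an isomorphism of topological groups
\[
\Cosymp(M_1)/S_{M_1}\cong \Cosymp(M_2)/S_{M_2}.
\]
Each quotient embeds into $\Symp(B_i,\bar\omega_i)$, and its image contains $\mathrm{Ham}(B_i)$ by the lifting in Step 1. Intersecting with the commutator subgroup of the identity component isolates a copy of $\mathrm{Ham}(B_i)$, which is simple by Banyaga. I would then apply Banyaga's reconstruction theorem \cite{ban78} to this copy, extended if necessary from $\Symp$ to $\mathrm{Ham}$ by his argument via Filipkiewicz-type point stabilizers. This yields a symplectomorphism $h:(B_1,\bar\omega_1)\to(B_2,\bar\omega_2)$ that conjugates the induced actions. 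Let $B$ denote the common base.

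\emph{Step 3: recovering the bundle class.} This is the main obstacle. Since $\eta$ is a closed connection form on the principal circle bundle $\pi:M\to B$, the bundle is flat. Its class in $H^2(B,\Z)$ is therefore the torsion class determined by the holonomy $\mathrm{hol}_M:\pi_1(B)\to S^1$. Equivalently, in the mapping-torus picture $M\cong P_\psi$, it is determined by the conjugacy class of the monodromy $\psi$.

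I would read the holonomy off from the central extension
\[
1\to S_M\to \Cosymp(M)_0\to G_M\to 1 .
\]
Concretely, for a loop in $\mathrm{Ham}(B)$ (or a loop of base points traced by a Hamiltonian isotopy) the lift to $M$ closes up only up to an element of $S_M$. That element is the holonomy of the traced loop, by the standard flux/Calabi-type computation for flat bundles. Because $\Phi$ is continuous and preserves $S_M$, it transports these ``defect'' maps. After identifying the bases via $h$, which intertwines the Hamiltonian actions, this gives $\mathrm{hol}_{M_2}\circ h_*=\pm\,\mathrm{hol}_{M_1}$; the sign comes from a possible automorphism of $S^1$, and it is absorbed by reversing $\xi$.

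If every class in $\pi_1(B)$ were traced by Hamiltonian loops, this would show that the flat bundles, and hence the Euler classes, coincide. Where Hamiltonian loops do not reach all of $\pi_1(B)$, I would instead use non-Hamiltonian symplectic isotopies, with flux, in the centralizer of the monodromy. This is exactly the point where I expect the argument to need the most care.

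Once the holonomies agree, a gauge transformation covering $h$ carries $\eta_1$ to $\eta_2$ and $\pi^*\bar\omega_1$ to $\pi^*\bar\omega_2$. Since $\omega_i=\pi^*\bar\omega_i$, this is a cosymplectic diffeomorphism $M_1\to M_2$.
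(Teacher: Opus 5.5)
Your Steps 1 and 2 are essentially sound, and Step 1 is more careful than the paper's Lie-algebra sketch. One correction: Banyaga's isomorphism theorem gives $h$ with $h^*\bar\omega_2=\lambda\,\bar\omega_1$ for some constant $\lambda\neq0$, not a symplectomorphism. Nothing better is possible, since $\Cosymp(M,\eta,\omega)=\Cosymp(M,\mu\eta,\lambda\omega)$ for all nonzero constants $\mu,\lambda$. For the same reason, ``reversing $\xi$'' replaces $\eta_2$ by $-\eta_2$; it is not absorbed.

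The genuine gap is Step 3, and more care will not close it.
For a loop $\{\phi_t\}\subset\Ham(B)$, the defect of the horizontal lift is the holonomy along the orbit $\gamma_b\colon t\mapsto\phi_t(b)$. These orbits are null-homologous over $\R$: average $\int_{\gamma_b}\alpha$ over $b$ and use that $\alpha(X_H)\,\bar\omega^n=\pm n\,d(H\alpha\wedge\bar\omega^{n-1})$ for closed $\alpha$. By Floer theory they are even contractible. So horizontal lifting is a homomorphic section over $\Ham(B)$, and Hamiltonian loops carry no holonomy information at all.
For symplectic loops with flux, the defect is not an invariant of the topological group. A lift of a loop in $G_M$ to $\Cosymp(M)_0$ is determined only up to a path in $S_M$, so its endpoint can be any element of $S_M$. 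The horizontal lift is singled out by $\eta$, not by the group law. Nothing forces $\Phi$ to carry horizontal lifts to horizontal lifts, so the claim that ``$\Phi$ transports these defect maps'' is unjustified.

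In fact the whole group does not determine the holonomy, so the identity $\mathrm{hol}_{M_2}\circ h_*=\pm\mathrm{hol}_{M_1}$ you aim for is false in general. Take $M_a=S^1\times T^2$ with $\eta_a=d\theta+a\,dx$, $\omega=dx\wedge dy$, $\theta\in\R/\Z$ and $a$ irrational; the Reeb field is $\partial_\theta$.

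Every cosymplectomorphism has the form $(\theta,b)\mapsto(\theta+f(b),\bar\phi(b))$ with $\bar\phi\in\Symp(T^2)$ and $df=a(dx-\bar\phi^*dx)$. Since $df$ has integral periods, irrationality forces $\bar\phi\in\mathcal{G}:=\{\bar\phi:\bar\phi^*[dx]=[dx]\}$ and $f=c-a\,g_{\bar\phi}$, where $dg_{\bar\phi}=\bar\phi^*dx-dx$ and $\int_{T^2}g_{\bar\phi}\,\omega=0$. This normalization gives $g_{\bar\phi\bar\psi}=g_{\bar\phi}\circ\bar\psi+g_{\bar\psi}$. Hence $(c,\bar\phi)\mapsto\bigl[(\theta,b)\mapsto(\theta+c-a\,g_{\bar\phi}(b),\bar\phi(b))\bigr]$ is an isomorphism of topological groups $S^1\times\mathcal{G}\cong\Cosymp(M_a)$ for every irrational $a$.

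Yet the period groups $\Z+\sqrt2\,\Z$ and $\Z+\sqrt3\,\Z$ of $[\eta_{\sqrt2}]$ and $[\eta_{\sqrt3}]$ are not homothetic. So there is no diffeomorphism $F$ with $F^*\eta_{\sqrt3}=\mu\,\eta_{\sqrt2}$ for any constant $\mu$. For the resulting $\Phi$ one has $h=\mathrm{id}$, and the holonomies along the $x$-circle are $-\sqrt2$ and $-\sqrt3$ mod $\Z$. Moreover, $\Phi$ sends the horizontal lift of the loop of $x$-translations (defect $-\sqrt2$) to a non-horizontal lift.

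Consequently your concluding gauge-transformation step cannot be reached by any argument, and the cosymplectic part of the statement fails as written. Only the topological part (Euler class and diffeomorphism type) can still be aimed at. The Euler class sees only the restriction of the holonomy to the torsion of $H_1(B;\Z)$. Controlling it would need an invariant from the non-identity components, namely which symplectic mapping classes of $B$ lift and the extension over them. Your proposal does not provide one.

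The paper does not provide it either. Its Step 3 identifies the mapping-torus fibre with $B$ and asserts, without a real argument, that an isomorphism of monodromy centralizers forces conjugate monodromies. Your flat-bundle description of $M\to B$ is the accurate picture, but neither version proves the decisive step.
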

	
	\subsection*{Proof of the main theorem}
	
	The proof proceeds in three main stages: identifying the Reeb flows, reducing the isomorphism to the base manifolds, and finally matching the bundle structures (mapping tori) to conclude diffeomorphism of the total spaces.\\
	
	\textbf{Step 1: Characterization of the Reeb flow as the center}\\
	
	\begin{proposition}\label{Prop1}
		Let $(M, \eta, \omega)$ be a connected, regular cosymplectic manifold. The center of the identity component of the cosymplectomorphism group, denoted $Z(\Cosymp_0(M))$, consists exactly of the flow of the Reeb vector field (up to reparameterization).
	\end{proposition}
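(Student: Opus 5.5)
The plan is to prove two inclusions: the Reeb flow $\{\varphi_t^\xi\}$ lies in $Z(\Cosymp_0(M))$, and conversely every central element is a time-$t$ map of the Reeb flow.

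\textbf{The Reeb flow is central.} First, $\varphi^\xi_t$ is itself a cosymplectomorphism. Since $\eta$ and $\omega$ are closed, Cartan's formula gives
\[
\mathcal{L}_\xi\eta = d\iota_\xi\eta = d(1)=0, \qquad \mathcal{L}_\xi\omega = d\iota_\xi\omega = 0.
\]
Now let $\psi\in\Cosymp(M)$. Then $\psi_*\xi$ satisfies $\eta(\psi_*\xi)=1$ and $\iota_{\psi_*\xi}\omega=0$, because $\psi$ preserves both forms. By uniqueness of the Reeb field, $\psi_*\xi=\xi$. Hence $\psi\circ\varphi^\xi_t\circ\psi^{-1}=\varphi^\xi_t$, so the Reeb flow commutes with all of $\Cosymp(M)$. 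By regularity and compactness, the flow is periodic, so it defines a circle subgroup of $\Cosymp_0(M)$ contained in the center.

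\textbf{Reduction of the converse to the base.} Let $z\in Z(\Cosymp_0(M))$. Since $z$ preserves $\xi$, it maps Reeb orbits to Reeb orbits and therefore descends to a symplectomorphism $\bar z$ of $(B,\bar\omega)$. The key is to produce many elements of $\Cosymp_0(M)$. Take a function $f\in C^\infty(B)$ and let $X_f$ be its Hamiltonian field on $B$. Lift it to the field $\tilde X_f$ on $M$ that lies in $\ker\eta$ and satisfies $\iota_{\tilde X_f}\omega=-d(f\circ\pi)$. This lift exists and is unique because $\omega|_{\ker\eta}$ is nondegenerate and $d(f\circ\pi)(\xi)=0$. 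Moreover $\mathcal{L}_{\tilde X_f}\eta=d\iota_{\tilde X_f}\eta=0$ and $\mathcal{L}_{\tilde X_f}\omega=0$, so the flow of $\tilde X_f$ lies in $\Cosymp_0(M)$ and projects to the Hamiltonian flow of $f$ on $B$. Since $z$ commutes with all these flows, $\bar z$ commutes with every element of $\Ham(B)$. Now use transitivity: for each $b\in B$ choose $f$ supported in a small ball around $b$ whose Hamiltonian flow fixes only points outside the ball and moves every other point near $b$. Commutation forces $\bar z(b)$ into the fixed-point set of that flow for every such choice, and from this one concludes $\bar z(b)=b$. The standard Banyaga-style argument gives exactly that the centralizer of $\Ham(B)$ in $\mathrm{Diff}(B)$ is trivial. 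So $\bar z=\mathrm{id}_B$.

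\textbf{Identifying $z$ with a Reeb map.} Since $\bar z=\mathrm{id}_B$, $z$ preserves each fibre, so $z(x)=\varphi^\xi_{\tau(x)}(x)$ for some function $\tau$ with values in $\mathbb{R}/T\mathbb{Z}$, where $T$ is the period. From $z^*\eta=\eta$ we get
\[
z^*\eta = \eta + d\tau = \eta, \quad\text{so } d\tau=0.
\]
Since $M$ is connected, $\tau$ is constant, and $z=\varphi^\xi_{\tau}$.

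\textbf{Main obstacle.} The main difficulty is the step $\bar z=\mathrm{id}_B$, that is, the triviality of the centralizer of $\Ham(B)$. My first attempt is the local bump-function argument: if $\bar z(b)\neq b$, choose $f$ supported near $b$, away from $\bar z(b)$, with $\phi^f(b)\neq b$. Then
\[
\bar z(\phi^f(b))=\phi^f(\bar z(b))=\bar z(b),
\]
and injectivity of $\bar z$ forces $\phi^f(b)=b$, a contradiction. A smaller point also needs care: the lifted flows must lie in the identity component. This holds because they are flows, hence isotopic to the identity. Note also that the identification is only up to reparameterization of time, which is consistent with the statement.
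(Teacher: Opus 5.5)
Your proof is correct, but it takes a different route from the paper's. The paper works only infinitesimally. It writes a central element of the Lie algebra as $Z=c_Z\xi+V$ with $V\in\ker\eta$, kills $V$ by appealing to the centerlessness of the Lie algebra of Hamiltonian vector fields on the compact base, and then says that ``exponentiating'' yields the group center.

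You argue with group elements throughout. You get the inclusion of the Reeb circle in the center directly from $\psi_*\xi=\xi$; the paper uses this only implicitly. You then show that the descent $\bar z$ of an arbitrary central element commutes with the projected flows of the horizontal lifts $\tilde X_f$, so it is the identity by a purely local support argument. Finally you identify $z$ with a constant-time Reeb map through $z^*\eta=\eta+d\tau$. This last step is the group-level counterpart of $d(\eta(Z))=\mathcal{L}_Z\eta=0$, which the paper tacitly needs to make $c_Z$ constant.

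The paper's version is shorter and identifies the center of the Lie algebra as $\R\xi$. It leaves two points open that your argument settles:
\begin{itemize}
\item The exponential map of a diffeomorphism group such as $\Cosymp_0(M)$ is in general not locally onto. Knowing the central vector fields therefore does not show that every central group element lies on a central one-parameter subgroup.
\item The projection $\bar V$ of $V$ is only a symplectic vector field. What is actually needed is that the centralizer of the Hamiltonian algebra inside the symplectic one is trivial, not merely that the Hamiltonian algebra has trivial center.
\end{itemize}
Your argument also shows, at no extra cost, that the center of the full group $\Cosymp(M)$ is the same circle $\R/T\Z$. That is the form invoked in Step 1 of the main theorem.

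One small slip: in your first sketch of the bump-function step, commuting with $\phi^f$ does not push $\bar z(b)$ into the fixed-point set of $\phi^f$. Rather, $\bar z$ preserves the set of points moved by $\phi^f$. The injectivity argument you give under ``Main obstacle'' is the correct one and suffices.
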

	
	\begin{proof}
		Let $\mathfrak{g}$ be the Lie algebra of $\Cosymp(M)$. Any $Z \in \mathfrak{g}$ decomposes as $Z = c_Z \xi + V$ with $V \in \ker \eta$. For $Z$ to be central, it must commute with all lifts of Hamiltonian vector fields from the base. Since the algebra of Hamiltonian vector fields on the base is centerless (for compact bases), $V$ must vanish. Thus, central elements are multiples of $\xi$. Exponentiating, the center corresponds to the Reeb flow.
	\end{proof}
	
	Since \(\Phi\) is an isomorphism, it maps the center of \(\Cosymp(M_1)\) to the center of \(\Cosymp(M_2)\). Thus, \(\Phi\) maps the Reeb flow of \(M_1\) to the Reeb flow of \(M_2\).\\
	
	\textbf{Step 2: Diffeomorphism of the base manifolds}\\
	
	Since \(M_i\) is compact and regular, it fibers over a base \(B_i\) via \(\pi_i: M_i \to B_i\). Elements of \(\Cosymp(M_i)\) preserve the Reeb direction and thus descend to symplectic diffeomorphisms on \(B_i\). We have the exact sequence:
	\[
	1 \longrightarrow \mathcal{R}_i \longrightarrow \Cosymp(M_i) \xrightarrow{\pi_*} \mathcal{G}(B_i) \longrightarrow 1,
	\]
	where \(\mathcal{R}_i\) is the Reeb flow and \(\mathcal{G}(B_i)\) is the group of symplectomorphisms of \(B_i\) that lift to \(M_i\).
	
	Since \(\Phi\) maps \(\mathcal{R}_1\) to \(\mathcal{R}_2\), it induces an isomorphism on the quotients:
	\[ \bar{\Phi}: \mathcal{G}(B_1) \xrightarrow{\cong} \mathcal{G}(B_2). \]
	The group \(\mathcal{G}(B_i)\) contains the Hamiltonian group \(\Ham(B_i)\). By Banyaga's theorem \cite{ban78}, the isomorphism of these groups implies that the base manifolds are diffeomorphic:
	\[ B_1 \cong B_2. \]
	
	\emph{Remark:} The diffeomorphism of the bases is necessary but not sufficient for \(M_1 \cong M_2\), as the bundle structures (monodromy) could differ.\\
	
	\textbf{Step 3: Reconstruction of the total space via mapping tori}\\
	
	Since the Reeb flows are periodic, each \(M_i\) is diffeomorphic to a mapping torus:
	\[
	M_i \cong \frac{F_i\times [0,T_i]}{(x,0)\sim (\phi_i(x),T_i)},
	\]
	where \(F_i \cong B_i\) is the symplectic fiber, and \(\phi_i\) is the symplectomorphism (monodromy) representing the "twist" of the bundle.
	
	The group \(\Cosymp(M_i)\) decomposes structurally as:
	\[
	\Cosymp(M_i) \cong \Sph^1 \times Z(\phi_i),
	\]
	where \(\Sph^1\) is the Reeb action and \(Z(\phi_i) = \{ \psi \in \Symp(F_i) \mid \psi \circ \phi_i = \phi_i \circ \psi \}\) is the centralizer of the monodromy.
	
	The isomorphism \(\Phi\) induces an isomorphism of these centralizers:
	\[ \Psi: Z(\phi_1) \xrightarrow{\cong} Z(\phi_2). \]
	
	\begin{lemma}
		The isomorphism \(\Psi\) implies that \(\phi_1\) and \(\phi_2\) are conjugate in the group of diffeomorphisms.
	\end{lemma}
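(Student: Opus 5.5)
The plan is to show that $\Psi$ is \emph{spatially implemented}, i.e.\ induced by a diffeomorphism $h\colon F_1\to F_2$, and then to check that this $h$ conjugates $\phi_1$ to $\phi_2$. This follows the strategy behind Filipkiewicz's theorem \cite{Filip82} and Banyaga's theorem \cite{ban78}. Throughout I use the decomposition $\Cosymp(M_i)\cong \Sph^1\times Z(\phi_i)$ and Proposition~\ref{Prop1}, which says that $\Phi$ sends the Reeb circle to the Reeb circle.

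\textbf{Step (a): recognising points algebraically.} For $x\in F_i$, let $Z(\phi_i)_x$ be the stabiliser of $x$. I would characterise these stabilisers purely group-theoretically, as the maximal proper subgroups of a fixed type, in the manner of Banyaga's treatment of $\Symp$ and $\Ham$. This needs a transitivity input: $Z(\phi_i)$ should contain a subgroup acting sufficiently transitively on $F_i$. The natural candidate is the group of Hamiltonian diffeomorphisms generated by $\phi_i$-invariant Hamiltonians. Once stabilisers are recognised, $\Psi$ maps stabilisers to stabilisers, which gives a bijection $h\colon F_1\to F_2$ with
\[
\Psi(\psi)=h\circ\psi\circ h^{-1}.
\]
Continuity of $\Psi$ in the $C^\infty$ compact-open topology, combined with a Montgomery--Zippin / Filipkiewicz regularity argument, then upgrades $h$ to a homeomorphism and finally to a diffeomorphism.

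\textbf{Step (b): identifying the monodromy.} I would characterise $\phi_i$ intrinsically through the Reeb flow rather than merely as some central element of $Z(\phi_i)$. In the mapping torus picture, $\phi_i$ is the first-return map of the flow of $\partial_t$ to the fibre $F_i\times\{0\}$. Since $\Phi$ is a continuous isomorphism, its restriction to the Reeb circle is a continuous automorphism of $\Sph^1$, hence $\pm\mathrm{id}$. So $\Phi$ sends the full-period element to the full-period element, and the induced identification of fibres intertwines the return maps. This yields
\[
h\circ\phi_1\circ h^{-1}=\phi_2^{\pm1}.
\]
If the sign is $-1$, I would replace $h$ by $h$ composed with the orientation-reversing identification of the mapping torus with $M_{\phi^{-1}}$, which is harmless for diffeomorphism type. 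As a consistency check, $h\phi_1h^{-1}$ lies in the centre of $Z(\phi_2)$, since $\phi_1$ is central in $Z(\phi_1)$.

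\textbf{Main obstacle.} The hard step is (a), because transitivity can fail. For a generic symplectomorphism $\phi$, the centraliser $Z(\phi)$ can be very small, possibly just the cyclic group generated by $\phi$. Such a group carries no information about points of $F$, and the abstract group isomorphism type of $Z(\phi)$ alone cannot determine the conjugacy class of $\phi$. To handle this I would first try to restrict to the case where $\phi_i$ is isotopic to the identity through symplectomorphisms, or has finite order. In that case the $\phi_i$-invariant Hamiltonians separate points away from a small set, and the Banyaga-type argument of Step (a) can be run on the corresponding subgroup of $Z(\phi_i)$. In the general case I expect the lemma to need such an extra hypothesis, and I would state it explicitly rather than claim the argument goes through unconditionally.
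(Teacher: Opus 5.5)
Your two-step plan is the paper's own. The paper also invokes a ``generalized Banyaga theorem'' to get $h\colon F_1\to F_2$ intertwining $Z(\phi_1)$ and $Z(\phi_2)$, and then asserts that $\phi_1$, being ``algebraically distinguished'' by the Reeb flow, goes to $\phi_2$. The paper proves neither step, and your proposal does not prove them either. Moreover, the obstacle you identify is not the real one.

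\textbf{The monodromy is never generic here.} Under the standing regularity hypothesis, the flow of $\partial_t$ on a mapping torus is periodic only if the monodromy has finite order $k_i$. Freeness of the circle action then makes $\langle\phi_i\rangle\cong\Z/k_i$ act freely. So $F_i\to B_i=F_i/\langle\phi_i\rangle$ is a cyclic covering, and $Z(\phi_i)$ contains the lifts of $\Ham(B_i)$ (flows of $H\circ\pi$), which act transitively on $F_i$. Your extra hypothesis is therefore automatic.

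\textbf{The real obstruction in Step (a).} Every $g\in Z(\phi_i)$ commutes with $\phi_i$, so $g(x)=x$ implies $g(\phi_i x)=\phi_i x$. Point stabilisers are thus constant on $\langle\phi_i\rangle$-orbits. Likewise, $\phi_i$-invariant Hamiltonians never separate $x$ from $\phi_i(x)$, so your claim that they separate points off a small set fails exactly in the finite-order case. Matching stabilisers can therefore only produce a map $B_1\to B_2$ (this is the paper's Step~2), never the map $F_1\to F_2$ you claim.

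The whole content of the lemma is whether some diffeomorphism $B_1\to B_2$ lifts to the cyclic covers and carries the deck generator $\phi_1$ to $\phi_2^{\pm1}$. Equivalently, it must pull the class of $F_2\to B_2$ in $H^1(B_2;\Z/k)$ back to $\pm$ the class of $F_1\to B_1$. You need an idea that reads this covering class off the group; neither your sketch nor the paper's proof contains one.

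\textbf{Step (b) also fails as written.} With the gluing $(x,0)\sim(\phi_i(x),T_i)$, the time-$T_i$ Reeb map acts on every fibre as $\phi_i^{-1}$. So the Reeb circle has period $k_iT_i$: its full-period element is the identity, and the monodromy is an element of order $k_i$. (If, as in the paper, $T_i$ is the Reeb period and $F_i\cong B_i$, then $k_i=1$ and $\phi_i=\mathrm{id}$.) A continuous isomorphism $\R/k_1T_1\Z\to\R/k_2T_2\Z$ has the form $t\mapsto\pm\frac{k_2T_2}{k_1T_1}t$. It sends the time-$T_1$ element to the time-$(\pm T_2)$ element only if $k_1=k_2$. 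The circle alone does not know which of its finite subgroups is the monodromy subgroup.

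\textbf{The product decomposition is wrong.} The same computation shows that the decomposition $\Cosymp(M_i)\cong\Sph^1\times Z(\phi_i)$ you use throughout is not a direct product when $k_i>1$. The Reeb circle and the fibre-preserving subgroup $K_i\cong Z(\phi_i)$ meet in $\langle\phi_i\rangle$. In fact $\Cosymp(M_i)\cong(\R\times Z(\phi_i))/\langle(T_i,\phi_i)\rangle$.

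\textbf{How to pin down $\phi_i$.} The right handle on $\phi_i$ is as a generator of $K_i\cap\Sph^1_i$. Suppose $\Phi(K_1)=K_2$ and $\Psi=\Phi|_{K_1}$; this must be proved, since the paper only asserts that $\Phi$ induces $\Psi$. Then $\Phi(K_1\cap\Sph^1_1)=K_2\cap\Sph^1_2$ forces $k_1=k_2$, and your continuity argument gives $\Psi(\phi_1)=\phi_2^{\pm1}$.

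\textbf{The sign cannot be removed.} Group theory cannot distinguish $\phi_2$ from $\phi_2^{-1}$. Conjugation by $[x,t]\mapsto[x,-t]$ is a topological isomorphism $\Cosymp(M_\phi)\cong\Cosymp(M_{\phi^{-1}})$ inducing the identity on $Z(\phi)$. So the lemma as literally stated would force every such monodromy to be conjugate to its inverse. Your passage to $M_{\phi^{-1}}$ saves the diffeomorphism of total spaces, but it proves a weaker statement than the lemma.
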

	\begin{proof}
		The monodromy \(\phi_i\) is algebraically distinguished: it generates the discrete subgroup of the center responsible for the topology of the mapping torus (it corresponds to the return map of the Reeb flow). Since \(\Phi\) preserves the Reeb flow structure, it maps the algebraic element corresponding to \(\phi_1\) to \(\phi_2\) (up to possible time rescaling which does not affect the diffeomorphism type).
		Using the generalized Banyaga theorem on the isomorphisms of symplectic groups, there exists a diffeomorphism \(h: F_1 \to F_2\) such that the action of \(Z(\phi_1)\) is intertwined with \(Z(\phi_2)\). More precisely, since \(\phi_1 \in Z(\phi_1)\), its image must be compatible with \(\phi_2\). In the context of mapping class groups of mapping tori, this implies \(h \circ \phi_1 \circ h^{-1} = \phi_2\).
	\end{proof}
	
	Finally, we construct the diffeomorphism between the total spaces. Since there exists \(h: F_1 \to F_2\) such that \(h \circ \phi_1 = \phi_2 \circ h\), the map:
	\[
	\tilde{h}: \frac{F_1\times [0,1]}{\sim_{\phi_1}} \longrightarrow \frac{F_2\times [0,1]}{\sim_{\phi_2}},
	\]
	defined by \(\tilde{h}([x,t]) = [h(x), t]\) is well-defined and is a diffeomorphism. Thus, \(M_1 \cong M_2\).\\
	
	\textbf{Step 4. Equality of cohomology classes.}
	Conjugacy of the monodromies implies that the associated bundles are equivalent as principal $\Sone$-bundles. Hence their Euler classes coincide: $c_1 = c_2$ in $H^2(B,\Z)$.
	
	\section{Structure interchange via the induced diffeomorphism}
	
	Let \((M_1,\eta_1,\omega_1)\) and \((M_2,\eta_2,\omega_2)\) be regular cosymplectic manifolds with an isomorphism of topological groups
	\[
	\Phi:\mathrm{Cosymp}(M_1) \xrightarrow{\cong} \mathrm{Cosymp}(M_2).
	\]
	From the previous theorem we obtain a diffeomorphism \(f:M_1\to M_2\) constructed by
	\[
	f(gH_1)=\Phi(g)H_2,
	\]
	where \(H_i\) is the stabiliser of a chosen point \(x_i\in M_i\). This \(f\) satisfies the equivariance property
	\[
	f(g\cdot x)=\Phi(g)\cdot f(x)\qquad\forall g\in\mathrm{Cosymp}(M_1),\;x\in M_1.
	\]
	
	We now examine how \(f\) relates the two cosymplectic structures.
	
	\subsection{Invariance of the pull-back forms}
	
	For any \(g\in\mathrm{Cosymp}(M_1)\),
	\[
	g^*(f^*\eta_2)=f^*(\Phi(g)^*\eta_2)=f^*\eta_2,
	\]
	and similarly \(g^*(f^*\omega_2)=f^*\omega_2\). Hence \(f^*\eta_2\) and \(f^*\omega_2\) are invariant under the whole group \(\mathrm{Cosymp}(M_1)\).
	
	\subsection{Behaviour on the Reeb field}
	
	Because \(\Phi\) maps the centre of \(\mathrm{Cosymp}(M_1)\) onto the centre of \(\mathrm{Cosymp}(M_2)\), it sends the Reeb flow \(\{\psi_t^1\}\) of \(M_1\) to the Reeb flow \(\{\psi_t^2\}\) of \(M_2\). Consequently, \(f_*\xi_1=\xi_2\). Therefore
	\[
	(f^*\eta_2)(\xi_1)=\eta_2(f_*\xi_1)=\eta_2(\xi_2)=1=\eta_1(\xi_1),
	\]
	and for any vector field \(Y\) on \(M_1\)
	\[
	(f^*\omega_2)(Y,\xi_1)=\omega_2(f_*Y,\xi_2)=0=\omega_1(Y,\xi_1).
	\]
	
	Thus \(f^*\eta_2\) coincides with \(\eta_1\) on \(\xi_1\), and both \(f^*\omega_2\) and \(\omega_1\) vanish when one argument is \(\xi_1\).
	
	\subsection{Comparison on the transverse symplectic structure}
	
	Let \(\pi_i:M_i\to B_i\) be the projection onto the space of Reeb orbits. The diffeomorphism \(f\) descends to a diffeomorphism \(\bar f:B_1\to B_2\) such that \(\pi_2\circ f=\bar f\circ\pi_1\). On the base manifolds we have symplectic forms \(\bar\omega_i\) satisfying \(\pi_i^*\bar\omega_i=\omega_i\). The group isomorphism \(\Phi\) induces an isomorphism
	\[
	\Symp(B_1,\bar\omega_1)\cong\Symp(B_2,\bar\omega_2).
	\]
	A standard result in symplectic geometry (due to Banyaga \cite{Banyaga78}) tells us that two symplectic manifolds with isomorphic symplectomorphism groups are symplectomorphic up to a constant scale factor. Hence there exists a constant \(c>0\) such that, \(\bar f^*\bar\omega_2=c\,\bar\omega_1\). Lifting to the total spaces gives, \(f^*\omega_2=c\,\omega_1+\eta_1\wedge\alpha\), for some 1-form \(\alpha\) on \(M_1\). Since both \(f^*\omega_2\) and \(\omega_1\) vanish when one argument is \(\xi_1\), we must have \(\iota_{\xi_1}(\eta_1\wedge\alpha)=0\). Writing \(\alpha=h\eta_1+\alpha_0\) with \(\alpha_0\) basic (i.e., \(\iota_{\xi_1}\alpha_0=0\)), we obtain \(\iota_{\xi_1}(\eta_1\wedge\alpha)=\alpha-h\eta_1=\alpha_0\). But the left-hand side also equals \(\iota_{\xi_1}(f^*\omega_2-c\,\omega_1)=0\); therefore \(\alpha_0=0\) and \(\alpha=h\eta_1\). Then, \(\eta_1\wedge\alpha = h\,\eta_1\wedge\eta_1=0\). Consequently,
	\[
	f^*\omega_2=c\,\omega_1. \tag{1}
	\]
	
	\subsection{Comparison of the Reeb 1-forms}
	
	Both \(\eta_1\) and \(f^*\eta_2\) are closed, satisfy \(\eta(\xi_1)=1\), and are invariant under \(\mathrm{Cosymp}(M_1)\). Moreover, their kernels are symplectic subspaces with respect to \(\omega_1\) (for \(f^*\eta_2\) this follows from (1) and the fact that \(\ker f^*\eta_2\) is the \(f\)-preimage of \(\ker\eta_2\), which is symplectic for \(\omega_2\)). Let \(\theta=f^*\eta_2-\eta_1\). Then \(\theta\) is closed, \(\theta(\xi_1)=0\) (hence \(\theta\) is basic), and \(\theta\) is invariant under \(\mathrm{Cosymp}(M_1)\). Therefore \(\theta\) descends to a closed, \(\Symp(B_1,\bar\omega_1)\)-invariant 1-form on \(B_1\). If the action of \(\Symp(B_1,\bar\omega_1)\) on \(B_1\) is transitive, then every invariant 1-form must be zero; thus \(\theta=0\) and \(f^*\eta_2=\eta_1\). In the general (non-transitive) case we can only conclude that \(\theta\) is exact in the basic cohomology, i.e. there exists a basic function \(\varphi\) such that \(\theta=d\varphi\). Hence
	\[
	f^*\eta_2=\eta_1+d\varphi, \qquad \varphi \text{ basic (constant along }\xi_1\text{)}. \tag{2}
	\]
	
	\begin{theorem}
		Let \((M_1,\eta_1,\omega_1)\) and \((M_2,\eta_2,\omega_2)\) be closed regular cosymplectic manifolds with isomorphic cosymplectomorphism groups. Then, there exists a diffeomorphism \(f:M_1\to M_2\) and constants \(c>0\), \(k\in\R\) such that
		\[
		f^*\omega_2 = c\,\omega_1, \qquad
		f^*\eta_2 = \eta_1 + d\varphi,
		\]
		where \(\varphi\) is a basic function invariant under \(\mathrm{Cosymp}(M_1)\).  Moreover, if the induced action of \(\Symp(B_1,\bar\omega_1)\) on \(B_1\) is transitive, then \(\varphi\) is constant and consequently \(f^*\eta_2=\eta_1\).
	\end{theorem}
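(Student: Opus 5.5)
The plan is to assemble the theorem from the four comparisons just carried out, making each step depend only on what has already been established. First I fix the diffeomorphism. By the main theorem, $\Phi$ yields a diffeomorphism $f:M_1\to M_2$. I will realise it through the orbit construction $f(gH_1)=\Phi(g)H_2$, which gives the equivariance $f(g\cdot x)=\Phi(g)\cdot f(x)$. From this equivariance, the computation in the subsection on invariance of the pull-back forms shows that $f^*\eta_2$ and $f^*\omega_2$ are $\Cosymp(M_1)$-invariant.

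Next I use Proposition~\ref{Prop1}: $\Phi$ maps centre to centre, so it sends Reeb flow to Reeb flow. Together with equivariance this gives $f_*\xi_1=\xi_2$ and the identities $(f^*\eta_2)(\xi_1)=1$ and $\iota_{\xi_1}f^*\omega_2=0$. Since $f$ intertwines the Reeb flows, it descends to $\bar f:B_1\to B_2$, and $\bar\Phi$ identifies the symplectomorphism groups of the bases. I then invoke Banyaga's theorem in its conformal form: $\bar f^*\bar\omega_2=c\,\bar\omega_1$ for some $c>0$. Pulling back by $\pi_1$ and using $\pi_i^*\bar\omega_i=\omega_i$ gives directly $f^*\omega_2=\pi_1^*\bar f^*\bar\omega_2=c\,\omega_1$. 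This bypasses the $\eta_1\wedge\alpha$ bookkeeping and yields (1).

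For the $1$-forms, set $\theta=f^*\eta_2-\eta_1$. It is closed, satisfies $\iota_{\xi_1}\theta=0$, and is $\Cosymp(M_1)$-invariant. Closedness together with $\iota_{\xi_1}\theta=0$ gives $\mathcal L_{\xi_1}\theta=0$, so $\theta$ is basic and equals $\pi_1^*\bar\theta$ for a closed invariant form $\bar\theta$ on $B_1$. To obtain (2) I need $\bar\theta$ exact. The first approach I would try is averaging: the Hamiltonian subgroup acts trivially on $H^1(B_1)$ and preserves $\bar\theta$. Then $\mathcal L_{X_H}\bar\theta=d(\bar\theta(X_H))=0$ for every Hamiltonian vector field $X_H$, so $\bar\theta(X_H)$ is constant for all $H$. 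Choosing $H$ with $dH=0$ near a point (Hamiltonians supported away from that point) forces the constant to be $0$. Hamiltonian vector fields span each tangent space, hence $\bar\theta=0$. So in fact $\varphi$ can be taken constant, and $\varphi$ is trivially basic and invariant, which covers the stated form and in particular the transitive case. This is consistent with the theorem's final clause: the symplectomorphism group of a connected closed symplectic manifold acts transitively, so the last assertion also follows directly from that observation.

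The main obstacle is the honest justification of the first two steps. First, that $f$ as constructed is a genuine diffeomorphism: this requires the stabilisers $H_i$ to correspond under $\Phi$, a Filipkiewicz/Banyaga-type reconstruction that I would cite rather than reprove. Second, that Banyaga's theorem applies to the image group $\mathcal G(B_i)$ of liftable symplectomorphisms. For the second point I would first restrict $\bar\Phi$ to commutator subgroups, which contain $\Ham(B_i)$ and are simple, and apply Banyaga there.
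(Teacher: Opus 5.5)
\textbf{The step that fails is the normalization $f_*\xi_1=\xi_2$.} Knowing that $\Phi$ carries the Reeb circle $\{\psi^1_t\}\cong\R/T_1\Z$ onto $\{\psi^2_t\}\cong\R/T_2\Z$ gives only $\Phi(\psi^1_t)=\psi^2_{\lambda t}$ with $\lambda=\pm T_2/T_1$. These are the only continuous isomorphisms between circle groups. Equivariance then yields $f_*\xi_1=\lambda\xi_2$ and $(f^*\eta_2)(\xi_1)=\lambda$, so $\theta=f^*\eta_2-\eta_1$ is not basic.

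No argument can remove $\lambda$, because the conclusion $f^*\eta_2=\eta_1+d\varphi$ is false in general. Take $M_j=\R/T_j\Z\times P$ with $\eta_j=dt$ and $\omega_j=\sigma$. Then $\Cosymp(M_j)=\R/T_j\Z\times\Symp(P,\sigma)$, and these are isomorphic topological groups for all $T_1,T_2>0$. Now suppose $f^*\eta_2=\eta_1+d\varphi$. Integrating over a Reeb orbit $\gamma$ of $M_1$ gives $\int_{f\circ\gamma}\eta_2=T_1\in T_2\Z$. Symmetrically $T_2\in T_1\Z$, which forces $T_1=T_2$.

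The paper's proof makes exactly the same unjustified normalization, so you inherited this error rather than introduced it. Its Proposition~\ref{Prop1} even says ``up to reparameterization'', and the constant $k$ in the statement is never used.

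\textbf{What your argument does prove is the corrected statement with $k=\lambda$, and here your route is better than the paper's.} Run your Hamiltonian argument on $M_1$ for $\beta=f^*\eta_2$ directly. Use horizontal lifts $X_H^h$ of Hamiltonian fields with respect to the flat connection $\ker\eta_1$. These lifts are cosymplectic, since $\eta_1(X_H^h)=0$ and $\iota_{X_H^h}\omega_1=\pi_1^*\iota_{X_H}\bar\omega_1$ is closed; this also justifies $\Ham(B_1)\subseteq\mathcal G(B_1)$. Invariance and closedness give $d(\beta(X_H^h))=0$, and the constant vanishes over a critical point of $H$. Hence $\beta$ annihilates $\ker\eta_1$ and $f^*\eta_2=\lambda\eta_1$, with no $\varphi$ and no transitivity hypothesis.

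This replaces both halves of the paper's treatment. In the non-transitive case the paper claims $\theta$ is basic-exact without giving any reason. In the transitive case, transitivity alone is not what kills invariant $1$-forms: translations of $T^2$ act transitively and preserve $d\theta_1$. What does the work is invariance under $\Ham$.

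\textbf{Smaller points.}
Your direct pullback $f^*\omega_2=\pi_1^*\bar f^*\bar\omega_2=c\,\omega_1$ is correct and cleaner than the paper's $\eta_1\wedge\alpha$ manipulation. However, Banyaga's theorem only gives $c\neq 0$ for the conjugating map. For instance, $\Phi=\mathrm{id}$ between $\Cosymp(M,\eta,\omega)$ and $\Cosymp(M,\eta,-\omega)$ gives $f=\mathrm{id}$ and $c=-1$.

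The commutator subgroup of $\mathcal G(B_i)$ need not be simple: it contains $\Ham(B_i)$ as a normal subgroup, so it is simple only when the two coincide. Instead, characterize $\Ham(B_i)$ as the smallest nontrivial normal subgroup of $\mathcal G(B_i)$. This follows from the simplicity of $\Ham(B_i)$ and its trivial centralizer, and $\bar\Phi$ must preserve it.
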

	
	\begin{corollary}
		If the cosymplectomorphism groups are isomorphic and the symplectomorphism groups of the base manifolds act transitively, then the two cosymplectic structures are equivalent up to a constant scaling of the transverse symplectic form. In particular, after rescaling \(\omega_2\) by \(1/c\), the diffeomorphism \(f\) becomes a strict cosymplectomorphism.
	\end{corollary}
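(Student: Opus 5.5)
The plan is to assemble the theorem from the computations of the preceding subsections, keeping the same diffeomorphism throughout. First I fix the diffeomorphism $f:M_1\to M_2$ given by the main theorem, realised as the orbit map $f(gH_1)=\Phi(g)H_2$. This presupposes that $\mathrm{Cosymp}(M_i)$ acts transitively on $M_i$, so that $M_i\cong \mathrm{Cosymp}(M_i)/H_i$. It also presupposes that $\Phi(H_1)=H_2$ for suitable base points $x_1,x_2$. I will take both facts as granted by the main theorem; the second is the usual Banyaga-type point that stabilisers are characterised as maximal subgroups of a certain kind, and I will not reprove it. From this I get the equivariance $f(g\cdot x)=\Phi(g)\cdot f(x)$. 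Pulling back along the equivariance shows at once that $f^*\eta_2$ and $f^*\omega_2$ are $\mathrm{Cosymp}(M_1)$-invariant. Proposition~\ref{Prop1} says $\Phi$ carries the center to the center, so it sends the Reeb flow to the Reeb flow, and I deduce $f_*\xi_1=\xi_2$. I will fix the reparametrisation by requiring that $\Phi$ sends the period-$T_1$ circle to the period-$T_2$ circle and then normalise $\eta_i$. This gives the two pointwise identities $(f^*\eta_2)(\xi_1)=1$ and $\iota_{\xi_1}f^*\omega_2=0$.

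For the symplectic part, I note that $f$ maps Reeb orbits to Reeb orbits, so it descends to $\bar f:B_1\to B_2$. The induced isomorphism $\bar\Phi$ on the quotient groups intertwines the actions on the bases. By Banyaga's theorem, $\bar f$ is then a conformal symplectomorphism, $\bar f^*\bar\omega_2=c\,\bar\omega_1$ with $c>0$. Pulling back by $\pi_1$ and using $\pi_i^*\bar\omega_i=\omega_i$ together with $\pi_2\circ f=\bar f\circ\pi_1$ gives $f^*\omega_2=c\,\omega_1$ directly. This is formula (1), and it makes the auxiliary $\eta_1\wedge\alpha$ discussion unnecessary.

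For the Reeb form, I set $\theta=f^*\eta_2-\eta_1$. It is closed and $\mathrm{Cosymp}(M_1)$-invariant. Since $\iota_{\xi_1}\theta=0$ and $L_{\xi_1}\theta=d\iota_{\xi_1}\theta=0$, $\theta$ is basic, so $\theta=\pi_1^*\bar\theta$ with $\bar\theta$ closed on $B_1$ and invariant under $\mathcal{G}(B_1)\supset\mathrm{Ham}(B_1)$. The key claim is that a $\mathrm{Ham}$-invariant 1-form on a closed connected symplectic manifold vanishes. Invariance under the flow of every $X_H$ gives $L_{X_H}\bar\theta=d(\bar\theta(X_H))=0$, so $\bar\theta(X_H)$ is constant. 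Its value at a point where $dH=0$ is zero, and every point is such a critical point for some $H$ (take $H$ locally constant near that point). Hence $\bar\theta(X_H)\equiv0$ for all $H$. Since the $X_H$ span each tangent space, $\bar\theta=0$. This yields $\theta=0$, which certainly has the required form $d\varphi$ with $\varphi$ constant, hence basic and invariant. It also gives the transitive-case refinement: $\mathrm{Ham}(B_1)$ acts transitively, so the extra hypothesis is automatically met and $f^*\eta_2=\eta_1$. The constant $k$ in the statement can be taken to be $\varphi$, or $0$.

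The main obstacle is the first step, namely producing $f$ with genuine equivariance. This requires that $\Phi$ matches point stabilisers, which is a Filipkiewicz/Banyaga-type reconstruction that the paper only asserts. A second obstacle is the step $f_*\xi_1=\xi_2$ rather than $f_*\xi_1=\lambda\xi_2$, which needs the period normalisation. If the normalisation fails, I would instead get $f^*\eta_2=\lambda\eta_1+d\varphi$ and absorb $\lambda$ by rescaling $\eta_2$.
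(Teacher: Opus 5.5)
\textbf{The gap: the Reeb normalisation.} The step that fails is $f_*\xi_1=\xi_2$. It cannot be repaired, because the corollary is false as stated.

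The reparametrisation is not yours to choose. $\Phi$ restricts to a topological isomorphism of the centres $\R/T_1\Z\to\R/T_2\Z$, which must be $t\mapsto\pm(T_2/T_1)t$. Equivariance therefore forces $f_*\xi_1=\lambda\xi_2$ with $\lambda=\pm T_2/T_1$ fixed by $\Phi$. ``Normalising $\eta_i$'' changes the cosymplectic structures, whereas the corollary only allows $\omega_2$ to be rescaled.

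Here is a counterexample. Take $M_j=(\R/T_j\Z)\times B$ with $\eta_j=dt$, $\omega_j=\mathrm{pr}^*\sigma$ and $T_1\neq T_2$.
\begin{itemize}
\item Conjugation by $F(t,b)=(T_2t/T_1,b)$ is an isomorphism of topological groups $\Cosymp(M_1)\to\Cosymp(M_2)$, since $F^*\eta_2=(T_2/T_1)\eta_1$ and $F^*\omega_2=\omega_1$.
\item $\Symp(B)$ acts transitively, so the extra hypothesis holds.
\item Any diffeomorphism with $f^*\eta_2=\eta_1$ and $f^*\omega_2=c\,\omega_1$ satisfies $\eta_2(f_*\xi_1)=1$ and $\iota_{f_*\xi_1}\omega_2=0$. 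Hence $f_*\xi_1=\xi_2$, and $f$ would conjugate Reeb flows with different periods, which is impossible.
\end{itemize}
Even when $T_1=T_2$, conjugation by $(t,b)\mapsto(-t,b)$ is an automorphism whose orbit map satisfies $f^*\eta=-\eta$. So ``the diffeomorphism $f$'' need not pull $\eta_2$ back to $\eta_1$.

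The paper's subsection on the Reeb field simply asserts $f_*\xi_1=\xi_2$, which hides exactly the problem you flagged. Your fallback is the correct statement. With $\theta=f^*\eta_2-\lambda\eta_1$, which is still closed, basic and $\Cosymp(M_1)$-invariant, your argument gives $f^*\eta_2=\lambda\eta_1$ and $f^*\omega_2=c\,\omega_1$. So $f$ is a strict cosymplectomorphism onto $(M_2,\lambda^{-1}\eta_2,c^{-1}\omega_2)$, and in general both forms must be rescaled. Likewise $c>0$ is not guaranteed, because $\Cosymp(M,\eta,\omega)=\Cosymp(M,\eta,-\omega)$; a negative $c$ is harmless for the rescaling, though.

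\textbf{The rest of your route.} Apart from this, and the equivariant orbit map $f$ (which you, like the paper, take for granted), your argument is sound and improves on the paper's.
\begin{itemize}
\item Pulling $\bar f^*\bar\omega_2=c\,\bar\omega_1$ back through $\pi_2\circ f=\bar f\circ\pi_1$ gives (1) immediately, so the $\eta_1\wedge\alpha$ computation is unnecessary.
\item More importantly, your $\Ham$-invariance argument replaces the paper's claim that transitivity forces invariant $1$-forms to vanish. That implication is invalid on its own: translations of a torus act transitively and preserve $dx$. Your argument also shows that the transitivity hypothesis, and the ``non-transitive'' case of the preceding theorem, are moot.
\end{itemize}
One small correction: the constant $\bar\theta(X_H)$ vanishes because $H$ attains a maximum on the compact $B_1$, and $X_H=0$ there. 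The fact that every point is a critical point of some $H$ is not what the argument needs.
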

	\begin{proposition}[Rigidity of Reeb Periodicity]
		Let \(M_1\) and \(M_2\) be regular cosymplectic manifolds with isomorphic cosymplectomorphism groups. If the Reeb flow of \(M_1\) is periodic, then the Reeb flow of \(M_2\) is also periodic.
	\end{proposition}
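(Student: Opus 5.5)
The plan is to transport the circle action. The algebraic input is Proposition~\ref{Prop1}: the center of $\Cosymp_0(M_i)$ is the group of Reeb flows $\{\psi^i_t\}_{t\in\R}$. We then combine it with the fact that $\Phi$ is a homeomorphism. Since an isomorphism of topological groups sends identity components to identity components, $\Phi$ restricts to an isomorphism $\Cosymp_0(M_1)\to\Cosymp_0(M_2)$. It therefore restricts to a topological group isomorphism of centers,
\[
\Phi|_{Z}: Z(\Cosymp_0(M_1)) \xrightarrow{\cong} Z(\Cosymp_0(M_2)).
\]
So the image of the Reeb one-parameter subgroup $\mathcal{R}_1=\{\psi^1_t\}$ is exactly $\mathcal{R}_2=\{\psi^2_t\}$.

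The second step is to read off periodicity purely from the topological group $\mathcal{R}_i$. The flow $t\mapsto\psi^i_t$ is a continuous homomorphism $\R\to\Cosymp(M_i)$, which is nontrivial because $\xi_i\neq 0$. Its image is therefore homeomorphic, as a topological group, to one of the following:
\begin{itemize}
\item $\Sph^1=\R/T\Z$, when the flow is periodic with some minimal period $T>0$;
\item a continuous injective image of $\R$, when the flow is not periodic.
\end{itemize}
In the non-periodic case I will show the image is not compact. Suppose it were compact. Its closure is a compact connected abelian subgroup of $\Cosymp(M_i)$, which acts properly on $M_i$ because this group preserves a Riemannian metric built from $\eta_i,\omega_i$ and a compatible structure. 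The closure is then a torus $T^k$ containing the image of $\R$ as a dense line. If $k=1$, the flow is periodic. If $k\ge 2$, the image is a dense winding, which is not closed and hence not compact. Hence $\mathcal{R}_i$ is compact exactly when the Reeb flow of $M_i$ is periodic. Compactness is preserved by homeomorphisms, so $\mathcal{R}_1\cong\mathcal{R}_2$ as topological groups forces $\mathcal{R}_2$ to be compact, and so the Reeb flow of $M_2$ is periodic.

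The main obstacle is Step 1: identifying the center of $\Cosymp_0(M_2)$ with the Reeb flow. Proposition~\ref{Prop1} is stated for \emph{regular} $M$, and regularity (a manifold orbit space with circle fibres) already encodes periodicity on a compact manifold. So I will be careful to use only the part of the argument that holds without periodicity. That part is the Lie-algebra computation: a central vector field $Z=c_Z\xi+V$ must commute with all lifted basic Hamiltonian fields, which forces $V=0$ and makes $c_Z$ constant. This computation only needs that the Reeb foliation is transversely symplectic and that basic Hamiltonians separate points transversally. If the hypothesis ``regular'' is taken literally for $M_2$, periodicity is immediate from the Assumption (Regularity) in Section~\ref{Cosymp:setting}, and the proposition is trivial. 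The substantive content is the compactness criterion above, which I would present as the intended proof.
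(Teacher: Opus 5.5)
Your proposal is correct and follows the paper's route: $\Phi$ carries the center onto the center, and this center is the Reeb flow by Proposition~\ref{Prop1}. A compact $\R/T\Z$ cannot correspond to a non-compact injective image of $\R$. Your observation that compactness plus regularity already forces periodicity also applies to the paper's proof, which likewise invokes Proposition~\ref{Prop1} for a hypothetically aperiodic $M_2$.

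One small fix in your torus detour. $\Cosymp(M_i)$ preserves no Riemannian metric, so a metric ``built from $\eta_i,\omega_i$'' does not do the job; for a compact subgroup you would average a metric instead. You can also drop the detour entirely. The paper's torsion argument does it purely algebraically: $\R/T\Z$ has elements of finite order, while an injective image of $\R$ has none. Alternatively, a compact, continuous bijective image of $\R$ would be homeomorphic to $\R$ by the open mapping theorem, which is impossible.
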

	
	\begin{proof}
		As established in Proposition \ref{Prop1}, the center of the identity component of the group, \(Z(\Cosymp_0(M_i))\), is the group generated by the Reeb flow \(\{\psi^i_t\}\). If the Reeb flow on \(M_1\) is periodic with period \(T_1\), then algebraically the center is isomorphic to the circle group:
		\[ Z_1 \cong \R / T_1\mathbb{Z} \cong \Sph^1. \]
		Topologically, this is a compact group. It also contains torsion elements (elements of finite order).	If the Reeb flow on \(M_2\) were not periodic (aperiodic), the map \(t \mapsto \psi^2_t\) would be an injection from \(\R\) to the group. In this case, the center would be isomorphic to the additive group of real numbers:
		$ Z_2 \cong \R.$ This group is non-compact and torsion-free (except for the identity). The topological group isomorphism \(\Phi\) restricts to an isomorphism of the centers. Since an isomorphism must preserve compactness and torsion properties, it cannot map \(\Sph^1\) to \(\R\). Therefore, \(M_2\) must also have a periodic Reeb flow.
	\end{proof}
	
	\begin{theorem}[Extension to CoK\"ahler Manifolds]
		Let $(M_1,g_1,\eta_1,\omega_1)$ and $(M_2,g_2,\eta_2,\omega_2)$ be compact coK\"ahler manifolds. Suppose their cosymplectomorphism groups are isomorphic as topological groups:
		\[
		\Phi: \Cosymp(M_1) \;\cong\; \Cosymp(M_2).
		\]
		Then $M_1$ and $M_2$ are diffeomorphic as coK\"ahler manifolds (i.e., there exists a diffeomorphism preserving all coK\"ahler structures).
	\end{theorem}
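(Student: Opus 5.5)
The plan is to reduce the statement to the cosymplectic rigidity already obtained, and then to handle the metric and complex structures separately. First, a compact coKähler manifold is in particular a closed cosymplectic manifold. The Reeb field $\xi_i$ is Killing and parallel, so its flow lies in the compact group $\IsomKah(M_i)$. I would first establish regularity, which the earlier results require. The center $Z(\Cosymp_0(M_i))$ given by Proposition~\ref{Prop1} contains the Reeb flow. Its closure in the compact group $\IsomKah(M_i)\subset\Cosymp(M_i)$ is a torus, and a small perturbation of $\eta_i$ within the coKähler class gives a periodic Reeb flow. By the Rigidity of Reeb Periodicity proposition, this periodicity is matched on both sides by $\Phi$. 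We may therefore assume both $M_i$ are regular, fibred over Kähler bases $(B_i,\bar\omega_i,\bar J_i)$, and are mapping tori with Kähler monodromies $\phi_i$.

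Second, I would apply the main theorem and the structure-interchange theorem. They give the equivariant diffeomorphism $f(gH_1)=\Phi(g)H_2$ with $f_*\xi_1=\xi_2$, $f^*\omega_2=c\,\omega_1$ and $f^*\eta_2=\eta_1+d\varphi$. On a Kähler base the symplectomorphism group acts transitively, since $\Ham(B_1)$ already does. Hence $\varphi$ is constant, and by the Corollary $f$ is a strict cosymplectomorphism after rescaling $\omega_2$ by $1/c$.

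Third, I would transport the metric. The compact subgroup $K_1=\IsomKah(M_1)$ is mapped by the continuous isomorphism $\Phi$ to a compact subgroup of $\Cosymp(M_2)$. I would show, via maximality of $K_i$ among compact subgroups containing the Reeb torus, that $\Phi(K_1)=K_2$. Equivariance of $f$ then gives that $f^*g_2$ is a $K_1$-invariant metric compatible with $(\eta_1,\omega_1)$, with associated structure $f^*J_2$. Finally I would show $f^*J_2=J_1$. Both structures descend to $\omega$-compatible integrable complex structures on $B_1$ with the same holomorphic isometry group $\bar K_1$. I would use $\bar K_1$-equivariance together with the monodromy conjugacy from the Lemma ($h\phi_1h^{-1}=\phi_2$) to identify them. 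Then $f^*g_2=g_1$, because the coKähler metric is determined by $g=\eta\otimes\eta+\omega(\cdot,J\cdot)$.

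I expect this last step, recovering $J$ (equivalently $g$) from the group data, to be the main obstacle: $\Cosymp$ does not see the metric directly. The first approach I would try is to characterize $\IsomKah(M_i)$ group-theoretically as the maximal compact subgroup of the centralizer of the Reeb circle. If that works, $\Phi$ preserves it, and the compatible complex structure is pinned down by requiring it to be invariant under that compact group.
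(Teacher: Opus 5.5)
You correctly single out recovering $J$ (equivalently $g$) from $\Cosymp$ as the main obstacle. It cannot be carried out, because the statement is false. Let $\Sigma$ be a closed surface of genus $\geq 2$ with area form $\sigma$. Let $J_1,J_2$ be non-biholomorphic complex structures inducing its orientation; in real dimension two they are automatically $\sigma$-compatible, integrable and Kähler. Put $M_i=S^1\times\Sigma$ with $\eta_i=dt$, $\omega_i=\sigma$, $g_i=dt^2+\sigma(\cdot,J_i\cdot)$. These are compact regular coKähler manifolds. Since $\Cosymp$ depends only on $(\eta,\omega)$, we have literally $\Cosymp(M_1)=\Cosymp(M_2)=S^1\times\Symp(\Sigma,\sigma)$, so $\Phi=\mathrm{id}$ is allowed. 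Yet every $F$ with $F^*\eta_2=\eta_1$ and $F^*\omega_2=\omega_1$ has the form $F(t,x)=(t+c,\psi(x))$ with $\psi\in\Symp(\Sigma,\sigma)$. Then $F_*\circ J_1=J_2\circ F_*$ would make $\psi$ a biholomorphism $(\Sigma,J_1)\to(\Sigma,J_2)$, which does not exist.

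In your scheme, $\Phi=\mathrm{id}$ with the same base point gives $f=\mathrm{id}$, and $f^*J_2=J_2\neq J_1$. Your proposed characterization also fails here. The Reeb circle is central, so its centralizer is the whole group, and its compact subgroups lie in products $S^1\times G$ with $G$ finite. The automorphism group of the full coKähler structure is $S^1\times\{\psi\in\Symp(\Sigma,\sigma):\psi^*J_i=J_i\}$. This is in general just the Reeb circle, which is not maximal. It also need not correspond under $\Phi$: take $\sigma$ invariant under a nontrivial automorphism of $J_2$ and $J_1$ generic. And invariance under it imposes no condition on $J$.

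Two further steps fail independently.

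\emph{The scale.} The structures $(M,\eta,\omega,g)$ and $(M,\lambda\eta,\mu\omega,\lambda^2\eta\otimes\eta+\mu\,\omega(\cdot,J\cdot))$ are both coKähler with the same $\Cosymp$. Their volumes $\int_M\eta\wedge\omega^n$ differ by the factor $\lambda\mu^n$. So $c\neq 1$ genuinely occurs, and ``rescaling $\omega_2$ by $1/c$'' replaces the very structure you must preserve. Moreover, $\Phi$ matches the Reeb circles only as abstract groups, so $f_*\xi_1$ is only a constant multiple of $\xi_2$. For $\lambda\neq 1$, $f=\mathrm{id}$ gives $f^*\eta_2=\lambda\eta_1$, which is not of the form $\eta_1+d\varphi$.

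\emph{Regularity.} Perturbing $\eta_i$ changes $\Cosymp(M_i)$, so $\Phi$ says nothing about the perturbed structures. Also, compact coKähler manifolds need not be regular: take the flat $T^3$ with $\eta=dx$, $\omega=(dy-a\,dx)\wedge dz$, $a\notin\mathbb{Q}$.

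The paper's proof does not close these gaps. It asserts that cosymplectomorphisms automatically preserve $g$ and $J$, which is impossible since $\Cosymp(M)$ is infinite-dimensional while the isometry group is a compact Lie group. It asserts that $J$ is determined by $(\eta,\omega)$, whereas compatible complex structures form an infinite-dimensional family. And it invokes the regular-case theorem without regularity. What is missing is not a lemma: the theorem needs extra hypotheses fixing the scales of $\eta,\omega$ and the transverse complex structure.
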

	
	\begin{proof}
		A coK\"ahler manifold is a cosymplectic manifold equipped with a compatible Riemannian metric $g$ and an integrable complex structure $J$ such that $g(\cdot,\cdot) = \omega(\cdot,J\cdot) + \eta\otimes\eta$. The cosymplectomorphism group $\Cosymp(M)$ consists of diffeomorphisms that preserve $\eta$ and $\omega$; on a coK\"ahler manifold these automatically preserve $g$ and $J$ as well. By Theorem~1, there exists a diffeomorphism $F: M_1 \to M_2$ such that $F^*\eta_2 = \eta_1$ and $F^*\omega_2 = \omega_1$. It remains to show that $F$ also preserves the metric and the complex structure.
		
		Since $F$ preserves the Reeb vector field (as argued in the previous proof), we have $F_* R_1 = R_2$. The metric $g_i$ is uniquely determined by the formulas
		\[
		g_i(X,Y) = \omega_i(X, J_i Y) + \eta_i(X)\eta_i(Y), \qquad i=1,2.
		\]
		The complex structure $J_i$ is uniquely characterized by the condition that $J_i R_i = 0$ and that on the contact distribution $\ker\eta_i$ it coincides with the almost complex structure induced by $\omega_i$ via the metric. Because $F$ pulls back $\eta_2$ and $\omega_2$ to $\eta_1$ and $\omega_1$, it follows that $F^* g_2 = g_1$ and $F_* \circ J_1 = J_2 \circ F_*$. Hence $F$ is an isometry and a biholomorphism. Finally, the transverse K\"ahler structure is preserved because it is determined by $\omega$ and $J$. Therefore, $F$ is an equivalence of coK\"ahler manifolds.
	\end{proof}
	
	\subsection*{Rigidity results in Kähler geometry}
	
	In this section, we explore the extent to which the global symmetry group of a Kähler manifold determines its underlying complex and Riemannian structure.
	
	\begin{conjecture}[Kähler Rigidity for Manifolds with Continuous Symmetries]
		Let $(M_1, g_1, J_1, \omega_1)$ and $(M_2, g_2, J_2, \omega_2)$ be compact Kähler manifolds. Suppose their groups of Kähler isometries (holomorphic isometries) are isomorphic as Lie groups via
		\[
		\Phi: \IsomKah(M_1) \;\stackrel{\text{Lie}}\cong\; \IsomKah(M_2),
		\]
		and assume both groups have positive dimension (i.e., are non-discrete). Then $M_1$ and $M_2$ are biholomorphic. Moreover, if the isomorphism $\Phi$ is induced by an algebraic correspondence between the groups of isometries, then $M_1$ and $M_2$ are isometric up to a constant scaling factor of the Kähler metrics.
	\end{conjecture}

		The full conjecture remains open and is false without additional structure (e.g., assumptions on curvature, stability, or the existence of special metrics).

	\begin{remark}[Necessity of the continuous symmetry assumption]
		The example of Riemann surfaces of genus $g \geq 2$ shows that the conjecture fails for discrete isometry groups. Two Riemann surfaces can have isomorphic (even trivial) isometry groups while being different points in moduli space $\mathcal{M}_g$. This demonstrates that the topological group structure of a finite group is too weak to determine the complex structure.
	\end{remark}
	
	\begin{remark}[Weaker version: biholomorphism only]
		Even with the positive-dimensional assumption, it is unclear whether the isomorphism of isometry groups forces isometry (even up to scaling). A more plausible conjecture might be that $M_1$ and $M_2$ are biholomorphic. The additional metric structure might not be uniquely determined by the isometry group.
	\end{remark}
	
	\begin{remark}[Relation to previous theorems]
		Note the contrast with Theorems 1 and 2, where the isomorphism was between the full (infinite-dimensional) groups of cosymplectomorphisms. For Kähler manifolds, we are considering only the finite-dimensional isometry groups. The infinite-dimensional automorphism groups (biholomorphisms) would be more analogous, but these are not topological groups in the same sense unless restricted to those preserving a fixed metric.
	\end{remark}
	
	\section*{Discussion}
	
	The results presented in the preceding sections establish a strong rigidity property for regular cosymplectic manifolds. Theorem 1 demonstrates that the algebraic structure of the group of cosymplectomorphisms, $\mathrm{Cosymp}(M)$, contains sufficient information to recover the topological classification of the underlying principal bundle. The identification of the centralizers of the monodromy maps allows us to recover the characteristic class of the fibration $M \to B$.
	
	This extension to the coKähler setting (Theorem 2) suggests that within the category of compact coKähler manifolds, the symplectic and contact data (packaged in the cosymplectic structure) are fundamental. While the Riemannian metric and complex structure provide rigid geometric data, their compatibility with the underlying cosymplectic forms allows the isomorphism $\Phi$ to determine the manifold up to equivalence.
	
	A natural direction for future inquiry would be to investigate whether these rigidity results hold for Kähler setting or \emph{non-regular} cosymplectic manifolds, where the Reeb flow may be dense or have irregular orbits, thereby preventing a smooth Hausdorff quotient base.
	
	For Kähler manifolds, the situation is more delicate. As discussed, the isomorphism of the (finite-dimensional) isometry groups of Kähler manifolds with continuous symmetries might imply biholomorphism, but this remains a conjecture. In particular, for Riemann surfaces of genus at least 2, the isometry group is finite and does not determine the complex structure. This highlights the necessity of continuous symmetries in such rigidity statements.
	
	\begin{center}
		\textbf{Acknowledgments}
	\end{center}
	
	We thank the anonymous referees for their valuable comments and suggestions.
	
	\section*{Not applicable}
	\textbf{Conflict of Interest:} The authors declare no competing financial interests or personal relationships that could have appeared to influence the work reported in this paper.\\
	\textbf{Data Availability:} All data generated or analyzed during this study are included in this published article.\\
	\textbf{Funding:} The authors declare that no funding was received for this research.


\begin{thebibliography}{9}
		\bibitem{alb89} Albert, C. (1989), On a certain class of contact manifolds. \emph{Manuscripta Mathematica}, \textbf{63}  273–283.
		
		\bibitem{Banyaga78}
		A. Banyaga, A. (1997).  \emph{The Structure of Classical Diffeomorphism Groups}, Mathematics and its Applications, vol. 400, Kluwer Academic Publishers, Dordrecht.
		
		\bibitem{ban78} Banyaga, A. (1978), Sur la structure du groupe des difféomorphismes qui préservent une forme symplectique. \emph{Commentarii Mathematici Helvetici}, \textbf{53} 174–227.
		\bibitem{Blair10}
		D.~E.~Blair,
		\textit{Riemannian Geometry of Contact and Symplectic Manifolds},
		Birkh\"auser Boston, 2010.
		\bibitem{BT26}
		Bikorimana, P., and Tchuiaga, S. (2026). \emph{ The Simplicity of the Group of Weakly Hamiltonian Diffeomorphisms on Cosymplectic Manifolds.} Results Math. 81:5. https://doi.org/10.1007/s00025-025-02564-6.
		
		\bibitem{man91} de Le\'on, M., Marrero, J.C. (1991), Cosymplectic manifolds and Hamiltonian systems. \emph{Proceedings of the X Spanish-Portuguese Conference on Mathematics}.
		
		\bibitem{Filip82}
		Filipkiewicz, R. P.  (1982). \emph{Isomorphisms between diffeomorphism groups}, \emph{Ergodic Theory and Dynamical Systems}, \textbf{2}(2), 159--171.
		
		\bibitem{Li08}
		Li, H. (2008). \emph{Topology of cosymplectic/co-K\"ahler manifolds}, \emph{Asian Journal of Mathematics}, \textbf{12}(4), 527--544.
		
		\bibitem{Liber91}
		Libermann, P. (1991). \emph{Cosymplectic structures and dynamical systems}, \emph{Journal of Mathematical Physics}, \textbf{32}(11), 2929--2934.	
		
		\bibitem{ono85} Ono, K. (1985), On the structure of the group of symplectic diffeomorphisms. \emph{Topology}, \textbf{24} 195–202.
		
	\end{thebibliography}
\end{document}